\pgfplotsset{width=7cm}
\renewenvironment{proof}{\par {\sc {\bf Proof.}\hskip 5pt}}{\hfill \qed \par}
\begin{document}

\title{An improved binary programming formulation for the secure domination problem}
\author{Ryan Burdett, Michael Haythorpe}

\maketitle

\begin{abstract}The secure domination problem, a variation of the domination problem with some important real-world applications, is considered. Very few algorithmic attempts to solve this problem have been presented in literature, and the most successful to date is a binary programming formulation which is solved using CPLEX. A new binary programming formulation is proposed here which requires fewer constraints and fewer binary variables than the existing formulation. It is implemented in CPLEX, and tested on certain families of graphs that have previously been considered in the context of secure domination. It is shown that the runtime required for the new formulation to solve the instances is significantly less than that of the existing formulation. An extension of our formulation that solves the related, but further constrained, secure connected domination problem is also given; to the best of the authors' knowledge, this is the first such formulation in literature.

%

\emph{\bf Keywords:} Secure, domination, graph, binary programming, formulation, secure connected.
\end{abstract}

\section{Introduction}

In this manuscript we consider the {\em secure domination problem}, which is a recently introduced special case of the more famous {\em domination problem}. Consider a graph $G = \langle V,E \rangle$, with vertex set $V$ and edge set $E$. We shall use the convention that $n = |V|$ is the order of the graph, and $m = |E|$ is the size of the graph. We will use $N(i)$ to denote the subset of vertices $\{j \; | \; j \in V, (i,j) \in E\}$, and $N[i] = \{N(i),i\}$. We refer to $N(i)$ as the {\em open neighbourhood of $i$} and $N[i]$ as the {\em closed neighbourhood of $i$}. Then a {\em dominating set} of $G$ is a subset of the vertices $D$ that satisfies the following: for every vertex $v \in V$, at least one vertex in the closed neighbourhood of $v$ is contained in $D$. That is, every vertex in the graph is either in $D$, or is adjacent to a vertex in $D$; we refer to vertices for which this is true as being {\em covered}. Then the {\em domination problem} for a given graph $G$ is to find the dominating set $D^*$ of smallest cardinality for that graph, where the cardinality of $D^*$ is called the {\em domination number} $\gamma(G)$ of the graph.


There are many real-world interpretations of the domination problem, and we outline a common one here as follows. Suppose that the graph $G$ corresponds to a facility, and each vertex $V$ corresponds to a location in the facility. If you can observe location $i$ from location $j$, then $(i,j)$ is an edge in $G$. Then, if a dominating set $D$ is found, and a guard is placed at each of the locations in $D$, every location in the facility is being observed by at least one guard. The domination problem attempts to do so with the fewest guards.

The domination problem has been a topic of research for over a century, with roots in chess and recreational mathematics. An early problem involved finding the minimum number of queens required to cover an $n\times n$ chess board, in the sense that at least one queen could move to any given square on the board. Research has been carried out more extensively in recent years due to many applications in the realm of mathematics, computer science, social sciences and so on. A wealth of information on graph domination and the domination problem can be found in books by Haynes, Hedetniemi and Slater \cite{haynes}. In terms of calculating the domination number for arbitrary graphs, the current fastest algorithm developed by van Rooij and Bodlaender \cite{vanrooij} uses the {\em branch-and-reduce} paradigm, paired with a {\em measure and conquer} analysis method.

In 2005, Cockayne \cite{cockayne} introduced the concept of a {\em secure dominating set}. The interpretation is as follows. Suppose we have a facility $G$ containing locations $V$, and guards stationed at various locations corresponding to a dominating set $D$, as described previously. Now, suppose that there is an incursion at one of the vertices in $V$. Either a guard is already located at that vertex, or a guard from an adjacent vertex has moved to handle the incursion. However, if a guard moves, it is possible that this may leave another vertex unobserved, which may be undesirable. With this interpretation in mind, a dominating set $D$ is said to be {\em secure} if, for every vertex $v$, there exists a vertex $w \in N[v] \cap D$ such that $(D \setminus w) \cup v$ is also dominating set. In this case, we say that $w$ {\em defends} $v$. Then, the {\em secure domination problem} for a given graph $G$ is to find the secure dominating set $S^*$ of smallest cardinality for that graph, and the {\em secure domination number} $\gamma_s(G) = |S^*|$.

Note that, for a given vertex $v$, there may be multiple vertices capable of defending it. In the case of an incursion at $v$, only one of these, say $w$, is required to do so. In such a case, we will say that $w$ {\em moves to} $v$. Then, in the resulting configuration, a guard is now present at $v$, but there is no longer a guard present at $w$. Informally then, the requirement of a secure dominating set is that this new configuration is itself a dominating set.

In the years since Cockayne introduced the notion of secure dominating sets, there has been a moderate amount of research into the problem. It is obvious that the problem is NP-complete in general, and it has also been shown to be NP-complete for certain special cases \cite{merouane,pradhan,wang}. Bounds for the secure domination number have been computed for some specific graph families \cite{araki,cockayne,winter}. Finally, there has also been some effort put into developing algorithms for computing the secure domination number. For the specific cases of trees and block graphs, linear-time algorithms have been developed \cite{burgerlinear,pradhan}. For general graphs, very few attempts at developing algorithms have been made to date. In \cite{burgerbnb}, implementations of both a branch-and-bound algorithm, and a branch-and-reduce algorithm were presented. The same authors also formulated the problem as a binary program \cite{burgerformulation}, which they used CPLEX to solve. Their experiments indicated that the binary programming formulation was significantly faster than either of their other algorithmic approaches. Hence, we focus only on their binary programming formulation here.

The formulation in \cite{burgerformulation} includes two sets of binary variables; first, $x_i = 1$ if vertex $i$ is to be included in the optimal secure dominating set, and $x_i = 0$ otherwise. Then, they define {\em swap} variables $z_{kl}$ such that $z_{kl} = 1$ implies that if the guard at vertex $l$ moves to vertex $k$, the resulting configuration is still dominating. By definition, $z_{kl} = 0$ if $l \not\in N(k)$, and so we only consider $z_{kl}$ to be defined if $l \in N(k)$. Hence, there are $n + m$ binary variables. Then their formulation can be paraphrased as follows on the next page.

Constraints (\ref{eq1})--(\ref{eq6}) have fairly natural interpretations. Constraints (\ref{eq1}) ensure that each vertex is either in the secure dominating set, or is adjacent to a vertex that is. Constraints (\ref{eq2}) ensure that each vertex is either guarded, or a neighbouring guard is in the swap set to defend it. Constraints (\ref{eq3}) ensure that $z_{kl}$ is only permitted to be equal to 1 if there is a guard at vertex $l$, and no guard at vertex $k$. Constraints (\ref{eq4}) ensure that, in the event that the guard at vertex $l$ moves to vertex $k$, every other vertex is still covered. Finally, constraints (\ref{eq5})--(\ref{eq6}) ensure that the variables are binary. It can be easily checked that there are $O(nm)$ constraints.

$$\mbox{minimise} \sum_{i=1}^n x_i$$

subject to the constraints
\begin{eqnarray}
\sum_{j \in N[i]} x_j & \geq & 1, \qquad\qquad \forall i = 1, \hdots, n,\label{eq1}\\
x_k + \sum_{l \in N(k)} z_{kl} & \geq & 1, \qquad\qquad \forall k = 1, \hdots, n,\label{eq2}\\
x_k - x_l + 2z_{kl} & \leq & 1, \qquad\qquad \forall k = 1, \hdots, n; \quad j \in N(k),\label{eq3}\\
z_{kl} - \sum_{j \in N[i] \setminus l} x_j & \leq & 0, \qquad\qquad \forall i,k = 1, \hdots, n; \quad i \not\in N[k]; \quad l \in N(k),\label{eq4}\\
x_k & \in & \{0,1\} \qquad \forall k = 1, \hdots, n,\label{eq5}\\
z_{kl} & \in & \{0,1\} \qquad \forall k = 1, \hdots, n; \quad l \in N(k).\label{eq6}\end{eqnarray}

In what follows, we will introduce an alternative binary programming formulation for the secure domination problem. Unlike the formulation discussed previously, it will be shown that in our formulation, only $n$ binary variables and $O(m + n^2)$ constraints are required. We will demonstrate that this leads to significant improvement in algorithmic performance. Finally, we will also show that our formulation can be augmented to provide a formulation of the {\em secure connected domination} problem. To the best of our knowledge, this is the first such formulation to be described in literature.

\section{Improved Formulation}

In the new formulation we propose here, we again use the $x_i$ variables as described above. We will also use new, continuous, variables $y_{ij}$. In our formulation, we will aim to have at least one {\em designated guard} for each vertex of the graph for which a guard is not already present. Then, the interpretation is that $y_{ij} > 0$ if the guard at vertex $i$ is a designated guard for vertex $j$, and $y_{ij} = 0$ otherwise. By definition, $y_{ij} = 0$ if $(i,j) \not\in E$, and so there are as many $y_{ij}$ variables as there are edges.

The new formulation is as follows.

%

$$\mbox{minimise} \sum_{i=1}^n x_i$$

subject to the constraints

\begin{eqnarray}
\sum_{j \in N[i]} x_j & \geq & 1, \qquad\qquad \forall i = 1, \hdots, n,\label{neweq1}\\
y_{ij} - x_i & \leq & 0, \qquad\qquad \forall i = 1, \hdots, n; \quad \forall j \in N(i),\label{neweq2}\\
\sum_{k \in N[i]} x_k - \sum_{\substack{k \in N(i)\\k \in N(j)}} y_{kj} & \geq & 1, \qquad\qquad \forall j = 1, \hdots, n; \quad \forall i\mbox{ such that } dist(i,j) = 2,\label{neweq3}\\
x_j + \sum_{i \in N(j)} y_{ij} & = & 1, \qquad\qquad \forall j = 1, \hdots, n,\label{neweq4}\\
y_{ij} & \geq & 0, \qquad\qquad \forall i = 1, \hdots, n; \quad \forall j \in N(i).\label{neweq5}\\
x_j & \in & \{0,1\}, \qquad \forall i = 1, \hdots, n,\label{neweq6}\end{eqnarray}

\begin{theorem}The set of solutions to (\ref{neweq1})--(\ref{neweq6}) correspond precisely to the secure dominating sets for a given graph.\end{theorem}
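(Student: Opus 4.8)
The plan is to establish the claimed correspondence in both directions, identifying each feasible solution $(x,y)$ with the vertex set $D = \{i \,:\, x_i = 1\}$, and showing that the sets $D$ arising this way are exactly the secure dominating sets. In both directions the pivotal observation is a characterisation of when a single guard move destroys domination: if $v \notin D$ and $w \in N(v) \cap D$ moves to $v$, producing $D' = (D \setminus \{w\}) \cup \{v\}$, then a vertex $u$ is left uncovered precisely when $w$ is its unique dominator in $D$ (that is, $N[u] \cap D = \{w\}$) and $u \notin N[v]$. Since such a $u$ lies in $N[w]$ while $w \in N(v)$, one checks that $u \neq w$ and that $dist(u,v) = 2$; hence the only candidates for becoming uncovered are vertices at distance exactly two from $v$ that are adjacent to $w$. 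This is exactly the index set over which constraint (\ref{neweq3}) ranges, and getting this geometric reduction precise is the main obstacle; the remainder of each direction is then routine verification.

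For the forward direction I would take a feasible $(x,y)$ and show $D$ is a secure dominating set. Domination is immediate from (\ref{neweq1}). Fix a vertex $v$; if $v \in D$ it defends itself, so suppose $v \notin D$. Then (\ref{neweq4}) forces $\sum_{i \in N(v)} y_{iv} = 1$, so some $w \in N(v)$ has $y_{wv} > 0$, and (\ref{neweq2}) gives $x_w \geq y_{wv} > 0$, hence $x_w = 1$ by (\ref{neweq6}), i.e. $w \in N(v) \cap D$. I claim this $w$ defends $v$. If not, by the characterisation above there is an $i$ with $dist(i,v) = 2$, $w \in N(i)$, and $N[i] \cap D = \{w\}$. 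Applying (\ref{neweq3}) to this pair $(i,v)$ yields $1 - \sum_{k \in N(i) \cap N(v)} y_{kv} \geq 1$, forcing that sum to be zero; but $w \in N(i) \cap N(v)$ with $y_{wv} > 0$ makes it positive, a contradiction. Hence $w$ defends $v$, and $D$ is secure.

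For the reverse direction I would start from a secure dominating set $D$, set $x_i = 1$ iff $i \in D$, and for each $v \notin D$ choose a defender $w(v) \in N(v) \cap D$ (which exists by security), setting $y_{w(v),v} = 1$ and all other $y$-variables to $0$. Constraints (\ref{neweq1}), (\ref{neweq2}), (\ref{neweq4}), (\ref{neweq5}) and (\ref{neweq6}) then follow directly from this construction. The only constraint requiring real work is (\ref{neweq3}). For a pair $(i,v)$ with $dist(i,v) = 2$, the subtracted sum is either $0$, when $w(v) \notin N(i)$, so that the constraint reduces to the domination inequality $|N[i] \cap D| \geq 1$; or it equals $1$, when $w(v) \in N(i) \cap N(v)$, in which case I must show $|N[i] \cap D| \geq 2$. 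This last inequality is exactly where security is used: since $w(v)$ defends $v$ and $i \notin N[v]$ (because $dist(i,v) = 2$), the vertex $i$ must remain covered by $D \setminus \{w(v)\}$, so $N[i] \cap D$ contains a vertex distinct from $w(v)$, giving $|N[i] \cap D| \geq 2$. Combining the two directions yields the stated correspondence.
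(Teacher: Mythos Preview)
Your proof is correct and follows the same core idea as the paper --- namely, that the only vertices which can become uncovered when a guard moves from $w$ to $v$ are those at distance exactly two from $v$, which is precisely the index set of constraint~(\ref{neweq3}). The paper argues this more informally and only treats the forward direction explicitly (feasible $(x,y)$ implies $D$ is secure dominating), whereas you carry out both directions carefully; in particular, your explicit construction of $y$ from a chosen defender $w(v)$ for the reverse direction, and the case split on whether $w(v)\in N(i)$ when verifying~(\ref{neweq3}), fill in details the paper leaves implicit.
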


\begin{proof}
In order to argue that the above formulation has solutions corresponding precisely to the secure dominating sets, we first outline the three requirements of a secure dominating set.

\begin{enumerate}\item The set itself must be a dominating set.
\item Each vertex $j$ must either be guarded, or have at least one designated guard chosen from its adjacent vertices.
\item If a designated guard at vertex $k$ moves to vertex $j$, then any vertices in $N(k)$ must still be covered.\end{enumerate}

It is clear that equations (\ref{neweq1}) and (\ref{neweq6}) satisfy requirement 1. It can also been seen that equations (\ref{neweq2}), (\ref{neweq4}) and (\ref{neweq5}) satisfy requirement 2, by the following argument. First, constraints (\ref{neweq2}) and (\ref{neweq5}) ensure that if $x_i = 0$, then $y_{ij} = 0$. Hence, there must be a guard at vertex $i$ in order to choose it as a designated guard for another vertex. Then, for each vertex $j$, constraint (\ref{neweq4}) ensures that either $x_j = 1$ or $\sum_{i \in N(i)} y_{ij} = 1$; this corresponds precisely to requirement 2.

Finally, we focus on requirement 3. We will show that constraint (\ref{neweq3}) ensures that requirement 3 is satisfied in all scenarios where a designated guard moves, and is redundant in all other scenarios. Suppose that there is an incursion at vertex $j$. Since requirement 2 is satisfied, there are two possibilities; either there is a guard present at vertex $j$, or vertex $j$ has at least one designated guard at some other vertex $k$. Consider first the case where there is a guard present at vertex $j$; clearly, no designated guard needs to move in this case. Then, by constraint (\ref{neweq4}) it is clear that $y_{kj} = 0$ for all $k$, and hence constraint (\ref{neweq3}) reduces to constraint (\ref{neweq1}).

Next, consider the case where there is no guard present at vertex $j$, and a designated guard at vertex $k$ moves to $j$. Then, consider a vertex $i \in N(k)$. This vertex still needs to be covered once the guard moves away from vertex $k$. If $i \in N(j)$ then it is still covered after the designated guard moves. Hence, we only need to focus on the case when $i \not\in N(j)$. This corresponds precisely to the situation when $dist(i,j) = 2$. Then it is clear that $y_{kj} > 0$. Therefore, constraint (\ref{neweq3}) reduces to $\sum_{k \in N[i]} x_k > 1$. By definition, $x_k = 1$, and so there must be at least one other vertex in $N[i]$ in the dominating set. Hence, vertex $i$ is still covered after the guard at vertex $k$ moves, satisfying requirement 3.\end{proof}

We now focus on the number of constraints in our new formulation. Excluding constraints (\ref{neweq3}) it is easy to check that there are $O(m)$ constraints. In general, the number of constraints in (\ref{neweq3}) depends on the graph, however in the worst case it could be $O(n^2)$. Hence the number of constraints is $O(m + n^2)$. However, we note that if the graph is sparse, it is likely that there are only $O(m) = O(n)$ constraints in (\ref{neweq3}), and hence there are $O(n)$ constraints overall, compared to $O(n^2)$ constraints in (\ref{eq1})--(\ref{eq6}). Also, if the graph is dense, then $m = O(n^2)$, and so there are $O(n^2)$ constraints overall, compared to $O(n^3)$ constraints in (\ref{eq1})--(\ref{eq6}). In either case, in general we expect a factor of $n$ fewer constraints in our formulation.

\section{Experimental Results}

We now present some experimental results demonstrating the improvements obtained by our formulation. Each example was run using CPLEX v12.9.0 via the Concert environment to MATLAB R2018b (9.5.0.944444). The experiments were conducted on an Intel(R) Core(TM) i7-4790 CPU with a 4 core, 3.60GHz processor and 8GB of RAM, running Windows 10 Enterprise version 1803. In each case, we set a maximum time limit of 10,000 seconds. It is worth noting that although there is an overhead involved with using the Concert environment in MATLAB, the times we report here are solely CPLEX runtimes.

We will consider the following six sets of instances.

\begin{enumerate} \item {\bf Square grid graphs} $\mathcal{G}_{k,k}$, which correspond to the Cartesian product of two equal-length paths. These were used in \cite{burgerformulation} when first benchmarking their formulation.
\item {\bf Hexagonal grid graphs} $\mathcal{H}_{k,k}$, which were also used in \cite{burgerformulation} when first benchmarking their formulation.
\item {\bf Queen graphs} $Q_k$, a generalisation of the Queens domination problem first formally proposed by de Jaenisch \cite{dejaenisch} to $n \times n$ sized boards.
\item {\bf Torus grid graphs} $T_{n,n}$, which correspond to the Cartesian product of two equal-length cycles.
\item {\bf Generalized Petersen graphs} $P(k,1)$, a family of 3-regular graphs first named by Watkins \cite{watkins}.
\item {\bf Generalized Petersen graphs} $P(k,2)$, a family of 3-regular graphs first named by Watkins \cite{watkins}.
\end{enumerate}

These sets of instances were chosen because in each case, there are known upper bounds for the secure domination number. Specifically, upper bounds for sets 1 and 4 are found in \cite{cockayne}, for set 2 are found in \cite{burgerformulation}, and for the remaining sets are found in \cite{winter}. In the case of the Hexagonal grid graphs and Queen graphs, the upper bounds are given only for $k \leq 10$.

\begin{figure}[p!]
{
                \centering
\begin{tikzpicture} 
\begin{axis}[
title={$k\times k$ Queens Graphs},
xlabel={k},
ylabel={time (seconds)},
xmin=2, xmax=12,
ymin=0, ymax=20000,
xtick={2,3,4,5,6,7,8,9,10,11,12},
ytick={0,0.01,0.1,1,10,100,1000,10000},
ymode=log,
legend pos=north west,
grid style=dashed,
]

\addplot[
dashed
]
coordinates {
                (2,0.015)
                (3,0.11)
                (4,0.094)
                (5,2.062)
                (6,762.546)
                (7,10000)
                (12,10000)

};

\addplot[
]
coordinates {
                (2,0.015)
                (3,0)
                (4,0.016)
                (5,0.266)
                (6,1.797)
                (7,8.875)
                (8,109.765)
                (9,3357.234)
                (10,6417.468)
                (11,10000)
                (12,10000)
                };

\end{axis}

\end{tikzpicture}
\hskip 5pt
\begin{tikzpicture} 
\begin{axis}[
title={$k \times k$ Grid Graphs},
xlabel={k},
ylabel={time (seconds)},
xmin=2, xmax=13,
ymin=0, ymax=20000,
xtick={2,3,4,5,6,7,8,9,10,11,12,13},
ytick={0,0.01,0.1,1,10,100,1000,10000},
ymode=log,
legend pos=north west,
grid style=dashed,
]

\addplot[
dashed
]
coordinates {
                (1,0.016)
                (2,0)
                (3,0.016)
                (4,0.093)
                (5,0.188)
                (6,0.89)
                (7,5.671)
                (8,10000)
                (13,10000)

};

\addplot[
]
coordinates {
                (1,0)
                (2,0.016)
                (3,0.016)
                (4,0.015)
                (5,0.032)
                (6,0.031)
                (7,0.406)
                (8,0.782)
                (9,1.609)
                (10,109.969)
                (11,4144.422)
                (12,10000)
                (13,10000)

};

\end{axis}

\end{tikzpicture}

\vspace*{1cm}

\begin{tikzpicture} 
\begin{axis}[
title={$k\times k$ Hexagonal Grid},
xlabel={k},
ylabel={time (seconds)},
xmin=2, xmax=14,
ymin=0, ymax=20000,
xtick={2,3,4,5,6,7,8,9,10,11,12,13,14},
ytick={0,0.01,0.1,1,10,100,1000,10000},
ymode=log,
legend pos=north west,
grid style=dashed,
]

\addplot[
dashed
]
coordinates {
                (2,0.016)
                (3,0.016)
                (4,0.078)
                (5,0.078)
                (6,0.875)
                (7,25.094)
                (8,2092.422)
                (9,10000)
                (14,10000)

};

\addplot[
]
coordinates {
                (2,0.016)
                (3,0.016)
                (4,0.016)
                (5,0.016)
                (6,0.047)
                (7,0.079)
                (8,0.203)
                (9,0.921)
                (10,26.109)
                (11,275.187)
                (12,3587.438)
                (13,10000)
                (14,10000)

};

\end{axis}

\end{tikzpicture}
\hskip 5pt
\begin{tikzpicture} 
\begin{axis}[
title={$k\times k$ Torus Graphs},
xlabel={k},
ylabel={time (seconds)},
xmin=2, xmax=12,
ymin=0, ymax=20000,
xtick={2,3,4,5,6,7,8,9,10,11,12},
ytick={0,0.01,0.1,1,10,100,1000,10000},
ymode=log,
legend pos=north west,
grid style=dashed,
]

\addplot[
dashed
]
coordinates {
                (2,0.094)
                (3,0.094)
                (4,0.14)
                (5,0.437)
                (6,1.469)
                (7,32.125)
                (8,10000)
                (12,10000)

};

\addplot[
]
coordinates {
                (2,0.015)
                (3,0.015)
                (4,0.016)
                (5,0.063)
                (6,0.109)
                (7,0.859)
                (8,10.828)
                (9,25.375)
                (10,10000)
                (12,10000)

};

\end{axis}

\end{tikzpicture}

\vspace*{1cm}

\begin{tikzpicture} 
\begin{axis}[
title={GP Graphs (k,1)},
xlabel={k},
ylabel={time (seconds)},
xmin=10, xmax=90,
ymin=0, ymax=20000,
xtick={10,20,30,40,50,60,70,80,90},
ytick={0,0.01,0.1,1,10,100,1000,10000},
ymode=log,
legend pos=north west,
grid style=dashed,
]

\addplot[
dashed
]
coordinates {
(4,0.016)
(5,0.015)
(6,0.015)
(7,0.031)
(8,0.032)
(9,0.063)
(10,0.062)
(11,0.141)
(12,0.203)
(13,0.204)
(14,0.172)
(15,0.516)
(16,0.328)
(17,0.453)
(18,0.469)
(19,0.906)
(20,0.922)
(21,0.906)
(22,1.062)
(23,1.563)
(24,1.531)
(25,1.5)
(26,1.875)
(27,2.75)
(28,6.016)
(29,2.235)
(30,3.297)
(31,7.922)
(32,6.328)
(33,6.375)
(34,8.344)
(35,19.75)
(36,35.985)
(37,11.219)
(38,24.734)
(39,36.625)
(40,17.875)
(41,20.984)
(42,39.312)
(43,59.859)
(44,10000)
(90,10000)
};

\addplot[
]
coordinates {
	(4,0.359)
	(5,0.125)
	(6,0.359)
	(7,0.141)
	(8,0.031)
	(9,0.062)
	(10,0.063)
	(11,0.188)
	(12,0.062)
	(13,0.032)
	(14,0.062)
	(15,0.032)
	(16,0.047)
	(17,0.062)
	(18,0.062)
	(19,0.156)
	(20,0.172)
	(21,0.094)
	(22,0.093)
	(23,0.093)
	(24,0.078)
	(25,0.094)
	(26,0.188)
	(27,0.25)
	(28,0.265)
	(29,0.25)
	(30,0.328)
	(31,0.328)
	(32,0.203)
	(33,0.328)
	(34,0.344)
	(35,0.5)
	(36,0.766)
	(37,0.391)
	(38,0.594)
	(39,0.672)
	(40,0.625)
	(41,0.61)
	(42,0.859)
	(43,1.437)
	(44,5.859)
	(45,0.859)
	(46,1.594)
	(47,7.25)
	(48,0.906)
	(49,1.531)
	(50,7.235)
	(51,9.734)
	(52,24.657)
	(53,9.969)
	(54,12.375)
	(55,24.656)
	(56,5.782)
	(57,13.703)
	(58,26.14)
	(59,34.5)
	(60,148.063)
	(61,20.016)
	(62,82.906)
	(63,143.219)
	(64,18.063)
	(65,62.594)
	(66,168.86)
	(67,245.109)
	(68,1158.609)
	(69,102.906)
	(70,490.312)
	(71,903.875)
	(72,100.359)
	(73,245.188)
	(74,908.406)
	(75,3355.407)
	(76,9460.312)
	(77,564)
	(78,3093.781)
	(79,7769.844)
	(80,1056.766)
	(81,2740.984)
	(82,9210.75)
	(83,10000)
    (84,10000)
    (85,4656.92)
    (86,10000)
	(90,10000)
	
};

\end{axis}

\end{tikzpicture}
\hskip 5pt
\begin{tikzpicture} 
\begin{axis}[
title={GP Graphs (k,2)},
xlabel={k},
ylabel={time (seconds)},
xmin=10, xmax=70,
ymin=0, ymax=20000,
xtick={10,20,30,40,50,60,70},
ytick={0,0.01,0.1,1,10,100,1000,10000},
ymode=log,
legend pos=north west,
grid style=dashed,
]

\addplot[
dashed
]
coordinates {
	(5,0.172)
	(6,0.235)
	(7,0.094)
	(8,0.063)
	(9,0.062)
	(10,0.094)
	(11,0.172)
	(12,0.157)
	(13,0.235)
	(14,0.188)
	(15,0.344)
	(16,0.422)
	(17,0.563)
	(18,1.156)
	(19,0.625)
	(20,0.782)
	(21,0.765)
	(22,1.125)
	(23,1.125)
	(24,1.609)
	(25,1.938)
	(26,1.39)
	(27,2.547)
	(28,2.671)
	(29,6.75)
	(30,6)
	(31,6.922)
	(32,9.359)
	(33,8.984)
	(34,12.766)
	(35,14.704)
	(36,8.031)
	(37,11.828)
	(38,21.75)
	(39,50.688)
	(40,19.375)
	(41,45.188)
	(42,22.781)
	(43,26.359)
	(44,10000)
	(70,10000)

};

\addplot[
]
coordinates {
	(5,0.031)
	(6,0.016)
	(7,0.015)
	(8,0.031)
	(9,0.016)
	(10,0.016)
	(11,0.031)
	(12,0.062)
	(13,0.062)
	(14,0.047)
	(15,0.047)
	(16,0.063)
	(17,0.063)
	(18,0.203)
	(19,0.094)
	(20,0.094)
	(21,0.219)
	(22,0.125)
	(23,0.172)
	(24,0.25)
	(25,0.531)
	(26,0.313)
	(27,0.375)
	(28,0.281)
	(29,0.391)
	(30,0.453)
	(31,1.078)
	(32,1.406)
	(33,1.203)
	(34,1.578)
	(35,0.813)
	(36,0.625)
	(37,2.656)
	(38,10.015)
	(39,20.89)
	(40,6.078)
	(41,7.922)
	(42,13.938)
	(43,10.875)
	(44,20.829)
	(45,57.969)
	(46,65.328)
	(47,46.109)
	(48,54.015)
	(49,38.89)
	(50,34.453)
	(51,151.156)
	(52,423.375)
	(53,1336.079)
	(54,259.859)
	(55,600)
	(56,883.875)
	(57,470.094)
	(58,1426.781)
	(59,1829.922)
	(60,10000)
	(70,10000)
	
};

\end{axis}

\end{tikzpicture}

}

\caption{Runtimes for each of the six families of instances considered. The dotted line corresponds to the runtime for the formulation by Burger et al. \cite{burgerformulation}, while the solid line corresponds to the formulation in this paper. If the runtime for a given instance is longer than 10,000 seconds, we consider that instance to have timed out, and set the final runtime to 10,000 seconds in the figure. Note that the instances in the first four figures have order $k^2$, while the instances in the final two figures have order $2k$.\label{fig1}}
\end{figure}
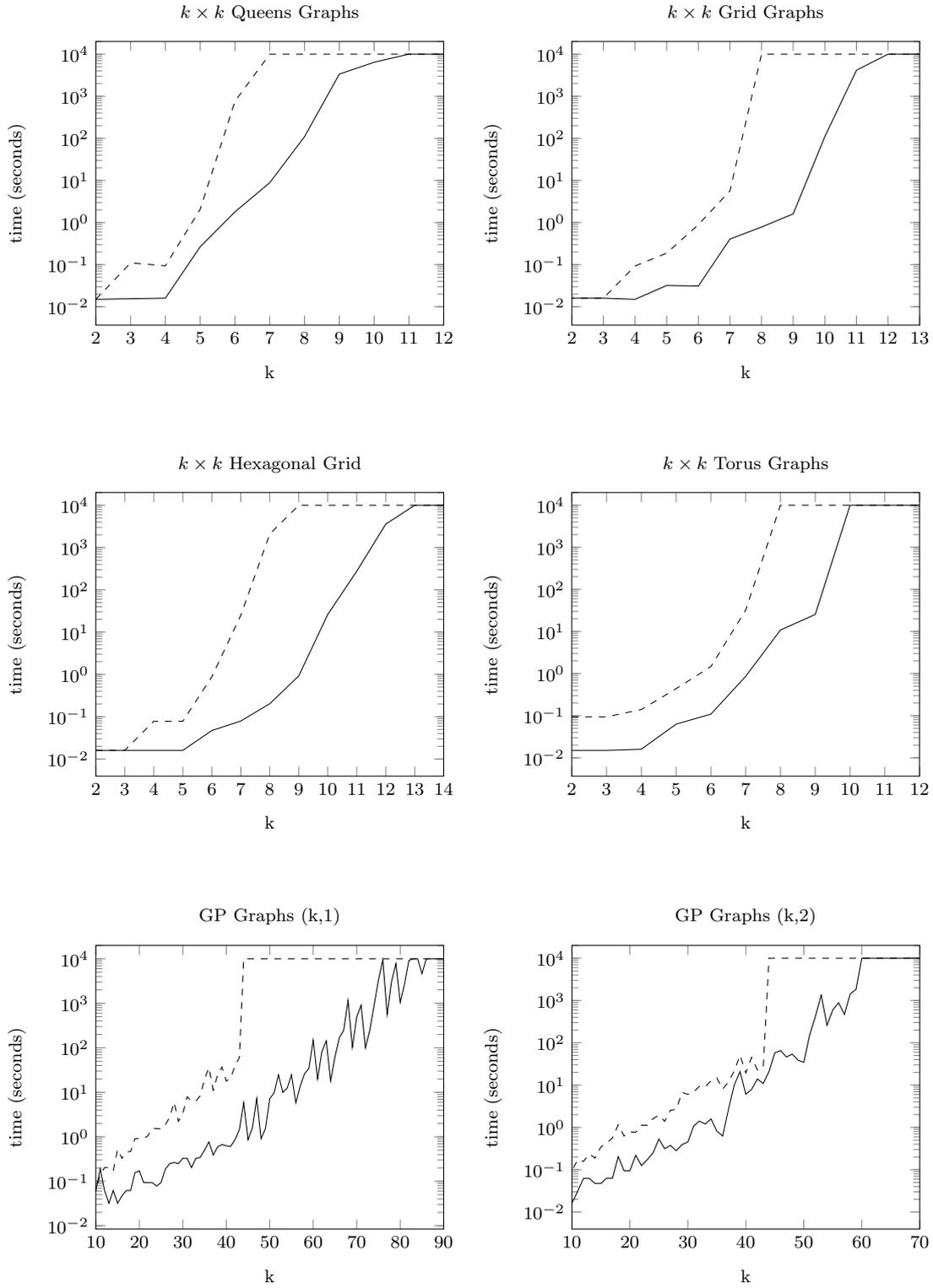

Figure \ref{fig1} displays the runtimes for each of the six sets of instances for both our formulation, and the formulation contained in \cite{burgerformulation}. If the instance takes longer than the allowed time of 10,000 seconds, we consider the instance to have timed out, and so we set the final time to 10,000 seconds in Figure \ref{fig1}. As can be seen, in all tested cases, our formulation is able to discover a minimum secure dominating set in less time than that of the formulation by Burger et al. \cite{burgerformulation}, and hence is able to solve larger instances for each of the sets in the allowed time.

Since there are very few instances where the secure domination number is known, we include here the exact values for the instances we were able to solve, so that they can be used by future researchers when evaluating heuristics or other formulations. These are displayed in Table \ref{tab1}. We make one small note here; in the paper by Burger et al. \cite{burgerformulation} they list the secure domination number for the $9 \times 9$ grid graph as 29. However, the correct answer appears to be 28. We display such a configuration on the $9 \times 9$ grid graph in Figure \ref{fig2}.

\definecolor{lightergray}{gray}{0.9}
\definecolor{midgray}{gray}{0.6}
\begin{table}[h!]\begin{center}
\begin{tabular}{cccccc|cccc}
\rowcolor{lightgray}{\bf\emph{k}} & \bf Queen & \bf Grid & \bf Hex & \bf Torus & & & {\bf\emph{k}} & \bf GP$(k,1)$ & \bf GP$(k,2)$\\
                         {\bf 2} & 1 & 2 & 2 & 2 & & &     {\bf 5}  & 4  & 4 \\
\rowcolor{lightergray}   {\bf 3} & 2 & 4 & 3 & 3 & & &     {\bf 10} & 8  & 8 \\
                         {\bf 4} & 3 & 7 & 5 & 6 & & &     {\bf 15} & 12 & 11 \\
\rowcolor{lightergray}   {\bf 5} & 4 & 9 & 7 & 9 & & &     {\bf 20} & 16 & 15 \\
                         {\bf 6} & 5 & 13 & 10 & 12 & & &  {\bf 25} & 19 & 19 \\
\rowcolor{lightergray}   {\bf 7} & 6 & 18 & 13 & 17 & & &  {\bf 30} & 23 & 22 \\
                         {\bf 8} & 7 & 23 & 17 & 22 & & &  {\bf 35} & 27 & 25  \\
\rowcolor{lightergray}   {\bf 9} & 7 & 28 & 21 & 27 & & &  {\bf 40} & 30 & 29 \\
                         {\bf 10} & 8 & 35 & 26 & - & & &  {\bf 45} & 34 & 33 \\
\rowcolor{lightergray}   {\bf 11} & - & 42 & 30 & - & & &  {\bf 50} & 38 & 36 \\
                         {\bf 12} & - & - & 36 & - &  & &  {\bf 55} & 42 & 40 \\
\rowcolor{lightergray}   {\bf 13} & - & - & - & - & &  &   {\bf 60} & 46 & - \\
                         {\bf 14} & - & - & - & - & & &    {\bf 65} & 49 & - \\
\rowcolor{lightergray}   {\bf 15} & - & - & - & - & &  &   {\bf 70} & 53 & - \\
                         {\bf 16} & - & - & - & - & &  &   {\bf 75} & 57 & - \\
\rowcolor{lightergray}   {\bf 17} & - & - & - & - & &  &   {\bf 80} & 60 & - \\
\rowcolor{lightergray}   {\bf 18} & - & - & - & - & &  &   {\bf 85} & 64 & - \\

\end{tabular}\caption{The secure domination numbers obtained for various instances from the formulation in this paper. In the cases where an instance timed out, the secure domination number was not found and a dash (-) is listed.\label{tab1}}\end{center}\end{table}

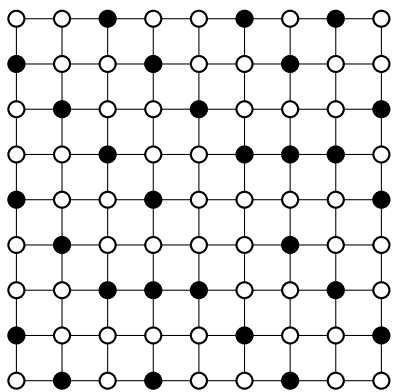
\begin{figure}[h!]\begin{center}
\begin{tikzpicture}[scale=0.6]
\foreach \n in {1,2,3,4,5,6,7,8} \foreach \m in {1,2,3,4,5,6,7,8,9} \draw (\n-1,\m-1) -- (\n,\m-1);
\foreach \n in {1,2,3,4,5,6,7,8,9} \foreach \m in {1,2,3,4,5,6,7,8} \draw (\n-1,\m-1) -- (\n-1,\m);
\foreach \n in {1,2,3,4,5,6,7,8,9} \foreach \m in {1,2,3,4,5,6,7,8,9} \draw[fill=white,thick] (\n-1,\m-1) circle (5pt);
\draw[fill=black] (2,8) circle (5pt); \draw[fill=black] (5,8) circle (5pt); \draw[fill=black] (7,8) circle (5pt); \draw[fill=black] (0,7) circle (5pt);
\draw[fill=black] (3,7) circle (5pt); \draw[fill=black] (6,7) circle (5pt); \draw[fill=black] (1,6) circle (5pt); \draw[fill=black] (4,6) circle (5pt);
\draw[fill=black] (8,6) circle (5pt); \draw[fill=black] (2,5) circle (5pt); \draw[fill=black] (5,5) circle (5pt); \draw[fill=black] (6,5) circle (5pt);
\draw[fill=black] (7,5) circle (5pt); \draw[fill=black] (0,4) circle (5pt); \draw[fill=black] (3,4) circle (5pt); \draw[fill=black] (8,4) circle (5pt);
\draw[fill=black] (1,3) circle (5pt); \draw[fill=black] (6,3) circle (5pt); \draw[fill=black] (2,2) circle (5pt); \draw[fill=black] (3,2) circle (5pt);
\draw[fill=black] (4,2) circle (5pt); \draw[fill=black] (7,2) circle (5pt); \draw[fill=black] (0,1) circle (5pt); \draw[fill=black] (5,1) circle (5pt);
\draw[fill=black] (8,1) circle (5pt); \draw[fill=black] (1,0) circle (5pt); \draw[fill=black] (3,0) circle (5pt); \draw[fill=black] (6,0) circle (5pt);
\end{tikzpicture}\end{center}
\caption{A secure dominating configuration for the $9 \times 9$ grid graph. The solid vertices correspond to those with guards present.\label{fig2}}
\end{figure}

\section{Secure Connected Domination}

We conclude this paper by considering the related problem of {\em secure connected domination}. First, we describe the concept of {\em connected domination}, which was first introduced in 2014 by Sampathkumar and Walikar \cite{sampathkumar}. Simply put, for a graph $G$, a dominating set $D$ is said to be {\em connected dominating} if the subgraph induced by the vertices in $D$ is connected in $G$. Then, for a graph $G$, a dominating set $D$ is said to be {\em secure connected dominating} if the following three requirements are true.

\begin{enumerate}[leftmargin=\parindent,align=left,labelwidth=\parindent,labelsep=0pt]
\item[\bf (R1)] \hspace*{0.12cm} $D$ is secure dominating.
\item[\bf (R2)] \hspace*{0.12cm} $D$ is connected dominating.
\item[\bf (R3)] \hspace*{0.12cm} If an incursion occurs at vertex $j \not\in D$, a designated guard is able to move to defend the incursion, such that the resulting configuration is also connected dominating.
\end{enumerate}

In the short time since the concept of secure connected domination was introduced, a small number results have been obtained. In particular, some bounds and exact values of the secure connected domination numbers have been found for graphs resulting from joins and compositions of graphs were given by Cabaro and Canoy Jr. \cite{cabaro}, while Lad, Reddy and Kumar \cite{lad} proved that the secure connected domination problem is NP-complete in the case of split graphs. Here, we add to the literature on this problem by providing a binary programming formulation for the secure connected domination problem. To the best of our knowledge, this is the first existing formulation of the problem in literature.

First, we paraphrase a formulation for the connected domination problem from \cite{fan}, in which several possible formulations were compared. The standout formulation was a modification of the well-known subtour elimination model by Miller, Tucker and Zemlin \cite{mtz}. First, for the given graph $G$ with vertex set $V$ and edge set $E$, we define a corresponding directed graph $G'$ in the following way. Begin with all the existing vertices of $G$. Each existing edge is replaced by two corresponding directed edges. In keeping with \cite{mtz}, we refer to these collectively as $E \cup E'$. Then, add two new vertices $n+1$ and $n+2$. Finally, add the directed edges $(n+1,i)$, $(n+2,i)$ for $i = 1, \hdots, n$, and the directed edge $(n+1,n+2)$. We refer to the directed edge set of $G'$ as $A$.

Note that in this formulation, two new sets of variables are added, namely binary variables $w_{ij}$ for $i,j = 1, \hdots, n+2$, and continuous variables $u_i \in [1,n+1]$ for $i = V \cup \{n+2\}$. Note that $u_{n+1} = 0$.

\begin{eqnarray}
\sum_{j \in N[i]} x_j & \geq & 1, \qquad\qquad \forall i = 1, \hdots, n,\label{fan1}\\
\sum_{i \in V} w_{n+2,i} & = & 1\label{fan2}\\
\sum_{i : (i,j) \in A} w_{ij} & = & 1, \qquad\qquad \forall j = 1, \hdots, n,\label{fan3}\\
w_{n+1,i} + w_{ij} & \leq & 1, \qquad\qquad \forall (i,j) \in E \cup E'\label{fan4}\\
(n+1)w_{ij} + u_i - u_j + (n-1) w_{ji} & \leq & n, \qquad\qquad \forall (i,j) \in E \cup E'\label{fan5}\\
(n+1)w_{ij} + u_i - u_j & \leq & n, \qquad\qquad \forall (i,j) \in A \setminus (E \cup E')\label{fan6}\\
w_{n+1,n+2} & = & 1,\label{fan7}\\
x_i + w_{n+1,i} & = & 1, \qquad\qquad \forall i = 1, \hdots, n.\label{fan8}
\end{eqnarray}

Note that constraints (\ref{fan1}) are simply the domination constraints. Then, the only link between the original $x_i$ variables and the new variables occurs in constraints (\ref{fan8}).

Now, recall the formulation presented in this paper for the secure domination problem. We define a new binary variable $z_{ik}$ as follows.

$$z_{ik} = \left\{\begin{array}{lll}x_i - y_{ik} & \mbox{ if } & k \in N(i),\\
x_i & \mbox{ if } & k \not\in N(i)\mbox{ and } i \neq k,\\
1 & \mbox{ if } & i = k.\end{array}\right.$$

The interpretation for $z_{ik}$ is as follows. Suppose that an incursion occurs at vertex $k$. If a guard is already present at vertex $k$, it handles the incursion. If not, a guard from an adjacent vertex moves to defend the vertex, and so a new configuration is obtained. Then, $z_{ik} = 1$ if there is a guard present at vertex $i$ in this new configuration, and $z_{ik} = 0$ otherwise. Note that by constraining the $z_{ik}$ variables to be binary, the $y_{ik}$ variables are also forced to be binary.

Then, for each $k = 1, \hdots, n$, we need $\{z_{ik}\}_{i=1}^n$ to be a connected dominating set. This can be easily achieved by applying the constraints (\ref{fan2})--(\ref{fan8}) for each of these, replacing $x_i$ with $z_{ik}$. First, introduce new variables $w^k_{ij}$ and $u^k_i$ for $k = 1, \hdots, n$ to be analogous to the previous $w$ and $u$ variables. Then the formulation is as follows.

\begin{eqnarray}
\sum_{i \in V} w^k_{n+2,i} & = & 1, \qquad\qquad \forall k = 1, \hdots, n,\label{nfan2}\\
\sum_{i : (i,j) \in A} w^k_{ij} & = & 1, \qquad\qquad \forall k,j = 1, \hdots, n,\label{nfan3}\\
w^k_{n+1,i} + w^k_{ij} & \leq & 1, \qquad\qquad \forall k = 1, \hdots, n, \quad (i,j) \in E \cup E'\label{nfan4}\\
(n+1)w^k_{ij} + u^k_i - u^k_j + (n-1) w^k_{ji} & \leq & n, \qquad\qquad \forall k = 1, \hdots, n, \quad \forall (i,j) \in E \cup E'\label{nfan5}\\
(n+1)w^k_{ij} + u^k_i - u^k_j & \leq & n, \qquad\qquad \forall k = 1, \hdots, n, \quad \forall (i,j) \in A \setminus (E \cup E')\label{nfan6}\\
w^k_{n+1,n+2} & = & 1, \qquad\qquad \forall k = 1, \hdots, n,\label{nfan7}\\
z_{ik} + w^k_{n+1,i} & = & 1, \qquad\qquad \forall k,i = 1, \hdots, n.\label{nfan8}
\end{eqnarray}

Finally, we can gather together the sets of constraints which satisfy the three requirements of secure connected domination. Specifically, requirement R1 is satisfied by constraints (\ref{neweq1})--(\ref{neweq6}), requirement R2 is satisfied by constraints (\ref{fan2})--(\ref{fan8}), and requirement R3 is satisfied by constraints (\ref{nfan2})--(\ref{nfan8}). Then, to complete the formulation, we simply add the objective function $\min_{i \in V} x_i$.

We conclude this section by determining the number of variables and constraints in this formulation. First, we note that the $z$ variables are just a notational convenience here, and due to their definition they can be entirely replaced by $x$ and $y$ variables. From constraints (\ref{nfan8}) they will be automatically binary and so we do not need to impose this separately. It can easily be seen that there are $2m + 2n + 1$ variables $w_{ij}$, and hence $2mn + 2n^2 + n$ variables $w^k_{ij}$. Finally, there are $n+1$ variables $u_i$ (recalling that $u_{n+1}$ is defined to be 0), and hence $n^2 + n$ variables $u^k_{ij}$. Hence, there are $O(nm + n^2)$ variables, and it can be easily checked that there is also $O(nm + n^2)$ constraints.

\bibliographystyle{plain}

\end{document}